\newcommand{\rright}{\right}
\newcommand{\lleft}{\left}
\numberwithin{equation}{section}
\newcommand{\coloneqq}{:=}
\newtheorem{thm}{Theorem}
\theoremstyle{definition}
\newtheorem{remark}{Remark}
\begin{document}

\begin{frontmatter}
\pretitle{Research Article}

\title{Drifted Brownian motions governed by fractional tempered derivatives}

\author[a]{\inits{M.}\fnms{Mirko}~\snm{D'Ovidio}\thanksref{cor1}\ead
[label=e1]{mirko.dovidio@uniroma1.it}}
\author[b]{\inits{F.}\fnms{Francesco}~\snm{Iafrate}\ead
[label=e2]{francesco.iafrate@uniroma1.it}}
\author[b]{\inits{E.}\fnms{Enzo}~\snm{Orsingher}\ead
[label=e1]{enzo.orsingher@uniroma1.it}}

\thankstext[type=corresp,id=cor1]{Corresponding author.}
\address[a]{SBAI, \institution{Sapienza University of Rome}, \cny{Italy}}
\address[b]{DSS, \institution{Sapienza University of Rome}, \cny{Italy}}




\markboth{M. D'Ovidio et al.}{Drifted Brownian motions governed by
fractional tempered derivatives}

\begin{abstract}
Fractional equations governing the
distribution of reflecting drifted Brownian motions are presented. The equations are
expressed in terms of tempered Riemann--Liouville type derivatives. For
these operators a Marchaud-type form is obtained and a Riesz tempered
fractional derivative is examined, together with its Fourier transform.
\end{abstract}
\begin{keywords}
\kwd{Tempered fractional derivatives}
\kwd{drifted Brownian motion}
\end{keywords}
\begin{keywords}[MSC2010]%
\kwd{34A08}
\kwd{60J65}
\end{keywords}

\received{\sday{24} \smonth{4} \syear{2018}}
\revised{\sday{28} \smonth{6} \syear{2018}}
\accepted{\sday{29} \smonth{8} \syear{2018}}
\publishedonline{\sday{19} \smonth{9} \syear{2018}}
\end{frontmatter}

\section{Introduction}
In this paper we consider various forms of tempered fractional
derivatives. For a function $ f $ continuous and compactly supported on the
positive real line, let us consider the Marchaud type operator defined by
%
\begin{equation}
\label{eq:march-def} \bigl(\mathscr{D}^{\alpha, \eta}f\bigr) (x) =
\int
_{0}^{\infty} \bigl(f(x) - f(x-y) \bigr) \, \varPi(
\mathrm{d} y)
\end{equation}
where
%
\begin{equation}
\varPi(\mathrm{d} y) = \frac{\alpha}{\varGamma(1-\alpha)} \frac
{e^{-\eta y }}{y^{\alpha+1}} dy ,\quad  y>0\xch{,}{.}
\label{LevyMH}
\end{equation}
with $ \eta>0, 0 < \alpha<1 $. The operator \eqref{eq:march-def}
coincides with the classical Marchaud derivative for $ \eta= 0 $.

The Laplace transform of the fractional operator \eqref{eq:march-def} reads
%
\begin{align}
\int_{0}^{\infty} e^{ - \lambda x} \, \bigl(
\mathscr{D}^{\alpha, \eta} f \bigr) (x) \, \mathrm{d} x &= \Biggl(
\int
_{0}^{\infty} \bigl( 1 - e^{ - \lambda y} \bigr) \varPi(
\mathrm{d} y) \Biggr) \tilde{f} (\lambda)
\nonumber
\\
&= \bigl( (\eta+ \lambda)^{\alpha}- \eta^{\alpha} \bigr) \tilde{f}(
\lambda) . \label{symbolH}
\end{align}
Throughout the work we denote by $\tilde{f}$ the Laplace transform of
$f$. In the Fourier analysis the factor $ (\eta+ i \lambda)^{\alpha
}- \eta^{\alpha}$ is the multiplier of the Fourier transform of
$ f $~\xch{\cite{meersc15}}{(\cite{meersc15})}. Tempered fractional derivatives emerge in the study
of equations driving the tempered subordinators \xch{\cite{beghin,meersc15}}{(\cite{beghin,meersc15})}.
In particular, the operator \eqref{eq:march-def} is the generator of
the subordinator $ H_{t}, t>0 $, with L\'{e}vy measure \eqref{LevyMH}
and density law whose Laplace transform is given by \eqref{symbolH},
that is,
\begin{align}
\mathbb{E} e^{- \lambda H_{t}} &= e^{-t ( (\eta+ \lambda)^{\alpha}-
\eta^{\alpha} ) } = e^{ - t \int_{0}^{\infty} ( 1 - e^{-\lambda y})
\varPi(\mathrm{d} y)}, \quad\lambda
>0.
\nonumber
\end{align}
The process $ H_{t} $ is called relativistic subordinator and
coincides, for $ \eta= 0 $, with a
positively skewed L\'{e}vy process, that is a stable subordinator.
Tempered stable subordinators can be viewed as the limits of Poisson
random sums with tempered power law jumps \xch{\cite{meersc15}}{(\cite{meersc15})}.

The fractional operator $ \mathscr{D}^{\alpha, \eta} f$ defined in
\eqref{eq:march-def} is related
to the tempered upper Weyl derivatives defined by
%
\begin{equation}
\label{eq:up-weyl-intro} \bigl(\hat{\mathscr{D}}^{\alpha, \eta
}_{+}f\bigr) (x) =
\frac{1}{\varGamma(1-\alpha)} \frac{\mathrm{d}}{\mathrm{d} x} \int
_{-\infty}^{x}
\frac{f(t)}{(x-t)^{\alpha}} e^{- \eta(x-t)} \, \mathrm{d} t.
\end{equation}
By combining \eqref{eq:up-weyl-intro} with the lower Weyl tempered
derivatives we obtain
the Riesz tempered fractional derivatives $ \frac{\partial^{\alpha,
\eta} f}{\partial|x|^{\alpha}} $ 
from which we obtain the explicit
Fourier transform in \eqref{eq:temp-riesz-fou}.

We consider the Dzherbashyan--Caputo derivative of order $ \frac{1}{2} $,
that is,
%
\begin{equation}
\bigl( D^{\frac{1}{2}} f \bigr) (t) = \frac{1}{\sqrt{\pi}} \int
_{0}^{t} f'(s) (t-s)^{-\frac{1}{2}} \,
\mathrm{d} s
\end{equation}
with the Laplace transform
\begin{align*}
\int_{0}^{\infty}e^{-\lambda t} \bigl(
D^{\frac{1}{2}} f \bigr) (t)\, \mathrm{d} t = \lambda^{\frac{1}{2}}
\tilde{f}(
\lambda) - \lambda^{\frac{1}{2}-1} f(0), \quad\lambda>0.
\end{align*}
The relationship between the Riemann--Liouville and the
Dzherbashyan--Caputo de\-ri\-vative can be given as follows,
%
\begin{equation}
\bigl( \mathscr{D} ^{\frac{1}{2}} f \bigr) (t) = \bigl( D^{\frac{1}{2}} f
\bigr) (t) + \frac{t^{\frac{1}{2} -1}}{\varGamma(\frac{1}{2})} f(0),
\end{equation}
from which we observe that
%
\begin{align}
\int_{0}^{\infty}e^{-\lambda t} \bigl( \mathscr{D}
^{\frac{1}{2}} f \bigr) (t) \, \mathrm{d} t = \lambda^{\frac{1}{2}}
\tilde{f}(
\lambda). \label{Ltransf-RL}
\end{align}
We remark that the problems
\begin{align*}
\lleft\lbrace
\begin{array}{@{}ll@{}}
\displaystyle\bigl(D^{\frac{1}{2}} u\bigr)(t) = - \frac{\partial u}{\partial y},
\quad  t>0, y>0\\[3pt]
\displaystyle u(0,y)=\delta(y)
\end{array}
\rright. \quad\text{\textrm{and}} \quad\lleft\lbrace
\begin{array}{@{}ll@{}}
\displaystyle\bigl(\mathscr{D}^{\frac{1}{2}} u\bigr)(t) = - \frac{\partial
u}{\partial y}, &  t>0, y>0\\[3pt]
\displaystyle u(0,y)=\delta(y)\\[3pt]
\displaystyle u(t,0)= \frac{1}{\sqrt{\pi t}}, &  t>0
\end{array}
\rright.
\end{align*}
have a unique solution given by the density law of an inverse to a
stable subordinator, say $L_{t}$ (see for example \cite[formulas 3.4
and 3.5]{dovidio}). It is well known that $L_{t}$ (with $L_{0}=0$) is
identical in law to a folded Brownian motion $|B_{t}|$ (with
$B_{0}=0$), that is, $u$ is the unique solution to the problem
\begin{align*}
\lleft\lbrace
\begin{array}{@{}ll@{}}
\displaystyle\frac{\partial u}{\partial t} = \frac{\partial^{2}
u}{\partial y^{2}}, \quad  t>0,\, y>0,\\[3pt]
\displaystyle u(0, y) = \delta(y),\\[3pt]
\displaystyle\frac{\partial u}{\partial y}(t, 0) =0.
\end{array}
\rright.
\end{align*}
Thus, by considering the theory of time changes, there exist
interesting connections between fractional Cauchy problems and the
domains of the generators of the base processes. In our view,
concerning the drifted Brownian motion, the present paper gives new
results also in this direction.

We denote by
%
\begin{equation}
\label{eq:temp-rl-def} \mathscr{D} _{t}^{\frac{1}{2}, \eta}f \coloneqq
e^{-\eta t} \mathscr{D} _{t}^{\frac{1}{2}} \bigl(
e^{\eta t} f \bigr) - \sqrt{\eta}f
\end{equation}
the tempered Riemann--Liouville type derivative. The equality between
definitions \eqref{eq:temp-rl-def} and \eqref{eq:march-def} can be
verified by comparing the corresponding Laplace transforms. Indeed,
from \eqref{Ltransf-RL},
\begin{align*}
\int_{0}^{\infty}e^{ - \lambda t}\, \mathscr{D}
_{t}^{\frac{1}{2}, \eta}f \, \mathrm{d} t ={}& \int_{0}^{\infty}e^{
-(\lambda+ \eta) t}
\mathscr{D} _{t}^{\frac{1}{2}} ( g ) \, \mathrm{d} t - \sqrt{\eta}
\tilde{f} (\lambda)
\\
={}& \sqrt{\lambda+\eta} \int_{0}^{\infty}e^{-(\lambda+\eta) t}
g(t)\, \mathrm{d} t - \sqrt{\eta}\tilde{f} (\lambda)
\end{align*}
where $g(t) = e^{\eta t} f(t)$.

Let $B $ represent a Brownian motion starting at the origin with
generator $\Delta$. In the paper we show that the transition density $
u = u(x,y,t) $ of the 1-dimensional process
\begin{equation*}
B^{\mu}(t) = B(t) + \mu t + x, \quad\mu>0, \, x \in\mathbb{R},
\end{equation*}
satisfies the fractional equation on $(0, \infty) \times\mathbb{R}^{2}$
%
\begin{equation}
\label{eq:frac-eq-u}
\lleft\{
\begin{aligned} &\mathscr{D} _{t}^{\frac{1}{2}, \eta} u +
\sqrt{\eta} \,u= a(x,y) \biggl( \frac{\partial u}{\partial x} + \sqrt
{\eta} u \biggr) = -
a(x,y) \biggl( \frac{\partial u}{\partial y} - \sqrt{\eta} u \biggr),
\\
&u(x,y,0) = \delta(x-y)
\end{aligned}
\rright.
\end{equation}
where
\begin{equation*}
a(x,y) = \mathbh{1}_{(-\infty,y] } (x) - \mathbh{1} _{(y, \infty) } (x)
\end{equation*}
and
\begin{align*}
\eta= \frac{\mu^{2}}{4}.
\end{align*}

A different result concerns the reflected process
\begin{equation*}
|B^{\mu}(t) + \mu t| + x = |\hat{B}|^{\mu}(t)
\end{equation*}
whose transition density $v = v(x,y,t) $ satisfies the equation
%
\begin{equation}
\label{eq:frac-eq-v} \mathscr{D} _{t}^{\frac{1}{2}, \eta} v + \sqrt
{\eta} \, v=
\frac{\partial v}{\partial x} + \sqrt{\eta} \tanh\bigl(\sqrt{\eta}
(y-x) \bigr) v,
\quad t >0,\; y > x >0,
\end{equation}
with initial and boundary conditions
\begin{align*}
v(x,y,0) ={}& \delta( y - x),
\\
v(x,x,t) ={}& \frac{e^{-\eta t}}{\sqrt{\pi t}}, \quad t>0,
\end{align*}
and
\begin{align*}
\eta= \frac{\mu^{2}}{4}.
\end{align*}
The fractional equation governing the iterated Brownian motion $ B^{\mu
_{2}} ( | B^{\mu_{1}} (t) | )$\break ($ B^{\mu_{j}} $ being independent)
has been studied in \cite{iafrate18} and in the special case $ B^{\mu
}( | B (t) | ) $ explicitly derived. For the iterated Bessel process a
similar analysis is performed in~\cite{DOVIDIO2011441}. A~general
presentation of tempered fractional calculus can be found in the paper \cite{meersc15}.

Many processes like Brownian motion, iterated Brownian motion, Cauchy
process have transition functions
satisfying different partial differential equations and also are
solutions of fractional equations
of different forms with various fractional derivatives. We here show
that a similar situation arises when
drifted reflecting Brownian motion is considered but in this case the
corresponding fractional equations
involve tempered Riemann--Liouville type derivatives.

\section{A generalization of the tempered Marchaud derivative}
In this section we study the tempered Weyl derivatives (upper and lower
ones) and construct the Riesz tempered derivative.
We are able to obtain the Fourier transform of the Riesz tempered
derivatives and thus to solve some generalized fractional diffusion
equation.

We start by giving the explicit forms of the tempered Weyl derivative
\begin{align}
& \bigl(\hat{\mathscr{D}}^{\alpha, \eta}_{+}f\bigr) (x)
\nonumber
\\
&= \frac{1}{\varGamma(1-\alpha)} \frac{\mathrm{d}}{\mathrm{d} x} \int
_{-\infty}^{x}
\frac{f(t)}{(x-t)^{\alpha}} e^{- \eta(x-t)} \, \mathrm{d} t
\\
&= \frac{1}{\varGamma(1-\alpha)} \frac{\mathrm{d}}{\mathrm{d} x} \int
_{0}^{\infty}
\frac{f(x-t)}{t^{\alpha}} e^{-\eta t} \, \mathrm{d} t
\nonumber
\\
&= \frac{1}{\varGamma(1-\alpha)} \frac{\mathrm{d}}{\mathrm{d} x} \int
_{0}^{\infty}
f(x-t) e^{-\eta t } \int_{t}^{\infty} \alpha
w^{-\alpha-1} \, \mathrm{d} w \, \mathrm{d} t
\nonumber
\\
&= \frac{1}{\varGamma(1-\alpha)} \int_{0}^{\infty} \alpha
w^{-\alpha-1} \, \mathrm{d} w \int_{0}^{w}
f'(x-t) e^{-\eta t} \, \mathrm{d} t
\nonumber
\\
&= \frac{1}{\varGamma(1-\alpha)} \int_{0}^{\infty} \alpha
w^{-\alpha-1} \, \mathrm{d} w \int_{x-w}^{x}
f'(t) e^{-\eta(x-t)} \, \mathrm{d} t
\nonumber
\\
&= \frac{1}{\varGamma(1-\alpha)} \int_{0}^{\infty} \alpha
w^{-\alpha-1} \, e^{-\eta x} \Biggl\{ f(t) e^{\eta t}
|_{x-w}^{x} - \eta\int_{x-w}^{x}
f(t) e^{\eta t} \, \mathrm{d} t \Biggr\} \, \mathrm{d} w
\nonumber
\\
&= \frac{1}{\varGamma(1-\alpha)} \int_{0}^{\infty} \alpha
w^{-\alpha-1} \, e^{-\eta x} \bigl[ f(x) e^{\eta x} -
f(x-w)e^{\eta(x-w)} \bigr] \, \mathrm{d} w
\nonumber
\\
& \quad- \frac{\eta}{\varGamma(1-\alpha)} \int_{0}^{\infty} \alpha
w^{-\alpha-1} \int_{x-w}^{x} f(t)
e^{-\eta(x-t)}\, \mathrm{d} w\, \mathrm{d} t
\nonumber
\\
&= \frac{1}{\varGamma(1-\alpha)} \int_{0}^{\infty} \alpha
w^{-\alpha-1} \, \bigl[ f(x) - f(x-w)e^{-\eta w} \bigr] \, \mathrm{d} w
\nonumber
\\
& \quad- \frac{\eta}{\varGamma(1-\alpha)} \int_{0}^{\infty} \alpha
w^{-\alpha-1} \int_{0}^{w} f(x-t)
e^{-\eta t}\, \mathrm{d} w \, \mathrm{d} t
\nonumber
\\
&= \frac{1}{\varGamma(1-\alpha)} \int_{0}^{\infty} \alpha
w^{-\alpha-1} \, \bigl[ f(x) + f(x) e^{-\eta w} - f(x) e^{-\eta w} -
f(x-w)e^{-\eta w} \bigr] \, \mathrm{d} w
\nonumber
\\
& \quad- \frac{\eta}{\varGamma(1-\alpha)} \int_{0}^{\infty} f(x-t)
e^{-\eta t} \int_{t}^{\infty}\alpha
w^{-\alpha-1} \, \mathrm{d} w \, \mathrm{d} t
\nonumber
\\
&= \frac{1}{\varGamma(1-\alpha)} \int_{0}^{\infty} \bigl(f(x) -
f(x-w) \bigr) \, \alpha\frac{e^{-\eta w}}{w^{\alpha+1}} \,\mathrm{d}
w \, \mathrm{d} t
\nonumber
\\
& \quad+ \frac{f(x)}{\varGamma(1-\alpha)} \int_{0}^{\infty} \alpha
w^{-\alpha-1} \bigl(1-e^{-\eta w} \bigr)\, \mathrm{d} w -
\frac{\eta}{\varGamma(1-\alpha)} \int_{0}^{\infty} f(x-t)
\frac{e^{-\eta t}}{t^{\alpha}} \, \mathrm{d} t
\nonumber
\\
&= \int_{0}^{\infty} \bigl(f(x) - f(x-w) \bigr) \varPi
(\mathrm{d} w) + \eta\int_{0}^{\infty} \bigl(f(x) -
f(x-w) \bigr) \frac{e^{-\eta w }}{w^{\alpha}\varGamma(1-\alpha)} \,
\mathrm{d} w
\nonumber
\end{align}
The derivative $ \hat{\mathscr{D}}_{+} ^{\alpha, \eta}$ can be
expressed in terms of $ \mathscr{D}^{\alpha, \eta} $ as follows:
\begin{equation*}
\hat{\mathscr{D}}^{\alpha, \eta}_{+}f = \mathscr{D}^{\alpha, \eta}
f -
\eta\, \mathscr{D}^{\alpha-1, \eta} f.
\end{equation*}
In the same way we can obtain the upper Weyl derivative in the Marchaud
form as
\begin{align}
& \bigl(\hat{\mathscr{D}}^{\alpha, \eta}_{-}f\bigr) (x)
\nonumber
\\
&= \frac{1}{\varGamma(1-\alpha)} \frac{\mathrm{d}}{\mathrm{d} x} \int
_{x}^{\infty}
\frac{f(t)}{(x-t)^{\alpha}} e^{- \eta(x-t)} \, \mathrm{d} t
\\
&= \int_{0}^{\infty} \frac{\alpha w ^{-\alpha-1}}{\varGamma(1-\alpha
)} \bigl\{
e^{-\eta w} f(x+w) - f(x)\bigr\} \,\mathrm{d} w + \eta\int
_{0}^{\infty} \frac{e^{\eta w} f(x+w)}{\varGamma(1-\alpha) w^{\alpha
} } \,\mathrm{d} w
\nonumber
\\
&= \frac{1}{\varGamma(1-\alpha)} \int_{0}^{\infty} \bigl[ f(x+w) -
f(x)\bigr] \frac{\alpha e^{-\eta w}}{w^{\alpha+1} }
\nonumber
\\
& \quad+ \frac{f(x)}{\varGamma(1-\alpha)} \int_{0}^{\infty} \alpha
w^{-\alpha-1} \bigl(e^{-\eta w } -1\bigr) \, \mathrm{d} w + \eta\int
_{0}^{\infty} f(x+t) \frac{e^{-\eta w}}{\varGamma(1-\alpha)
t^{\alpha}} \, \mathrm{d} t
\nonumber
\\
&= \int_{0}^{\infty} \bigl[ f(x+w) - f(x)\bigr] \varPi
(\mathrm{d} w) + \eta\int_{0}^{\infty} \bigl[ f(x+w) -
f(x)\bigr] \frac{e^{-\eta w}}{\varGamma(1-\alpha) w^{\alpha}} \,
\mathrm{d} w .
\nonumber
\end{align}
For $ 0 < \alpha<1 $ the Riesz fractional derivative writes
\begin{align}
\label{eq:riesz-der} \frac{\partial^{\alpha}f}{\partial|x|^{\alpha
}} &= - \frac{1}{2 \cos\frac{\alpha\pi}{2} \, \varGamma(1-\alpha)}
\int
_{-\infty}^{+\infty} \frac{f(t)}{|x-t|^{\alpha}} \, \mathrm{d} t
\\
&= - \frac{1}{2 \cos\frac{\alpha\pi}{2} \, \varGamma(1-\alpha)}
\Biggl[ \frac{\mathrm{d}}{\mathrm{d} x} \int_{-\infty}^{x}
\frac{f(t)}{(x-t)^{\alpha}} \, \mathrm{d} t - \frac{\mathrm{d}}{\mathrm
{d} x} \int_{x }^{\infty}
\frac{f(t)}{(t-x)^{\alpha}} \, \mathrm{d} t \Biggr].
\nonumber
\end{align}
In the same way we define the tempered Riesz derivative as
\begin{align*}
\frac{\partial^{\alpha, \eta} f}{\partial|x|^{\alpha}} = C_{\alpha
, \eta} \Biggl[ \frac{\mathrm{d}}{\mathrm{d} x} \int
_{-\infty}^{x} \frac{f(t)}{(x-t)^{\alpha}} \frac{e^{- \eta(x-t)} }{
\varGamma(1-\alpha)}\,
\mathrm{d} t - \frac{\mathrm{d}}{\mathrm{d} x} \int_{x }^{\infty}
\frac{f(t)}{(t-x)^{\alpha}} \frac{e^{- \eta(t-x)} }{ \varGamma
(1-\alpha)} \, \mathrm{d} t \Biggr]
\end{align*}
where $ C_{\alpha,\eta} $ is a suitable constant which will be
defined below.
In view of the previous calculations we have that
\begin{align}
\frac{\partial^{\alpha, \eta} f}{\partial|x|^{\alpha}} &=
C_{\alpha, \eta} \Biggl[  \int_{0}^{\infty}
\bigl(f(x) - f(x-w) \bigr) \frac{\alpha e^{-\eta w} \mathrm{d}
w}{\varGamma(1-\alpha) w^{\alpha+ 1} }
\nonumber
\\
&\quad  + \eta\int_{0}^{\infty} \bigl(f(x) - f(x-w) \bigr)
\frac{e^{-\eta w}\mathrm{d} w}{\varGamma(1-\alpha) w^{\alpha}}
\\
&\quad  - \int_{0}^{\infty} \bigl(f(x+w) - f(x) \bigr)
\frac{\alpha e^{-\eta w}\mathrm{d} w}{\varGamma(1-\alpha) w^{\alpha+
1} }
\nonumber
\\
&\quad  - \eta\int_{0}^{\infty} \bigl(f(x+w) - f(x) \bigr)
\frac{e^{-\eta w}\mathrm{d} w}{\varGamma(1-\alpha) w^{\alpha}}
\Biggr]
\nonumber
\\
& = C_{\alpha, \eta} \Biggl[ \int_{0}^{\infty} \bigl( 2
f(x) - f(x-w) - f(x+w) \bigr) \frac{\alpha e^{-\eta w}\mathrm{d}
w}{\varGamma(1-\alpha) w^{\alpha+ 1} }
\nonumber
\\
&\quad+ \eta\int_{0}^{\infty} \bigl( 2 f(x) - f(x-w)
- f(x+w) \bigr) \frac{ e^{-\eta w}\mathrm{d} w}{\varGamma(1-\alpha)
w^{\alpha} } \Biggr].
\nonumber
\end{align}
We now evaluate the Fourier transform of the tempered Riesz derivative
\begin{align}
\label{eq:temp-riesz-fou} \int_{-\infty}^{+\infty} e^{i\gamma x}
\frac{\partial^{\alpha, \eta} f}{\partial|x|^{\alpha}} \, \mathrm
{d} x ={}& C_{\alpha, \eta} \Biggl\{  \hat{F}(\gamma
) \int_{0}^{\infty} \bigl( 1 - e^{i\gamma w} \bigr)
\frac{\alpha e^{-\eta w}\mathrm{d} w}{\varGamma(1-\alpha) w^{\alpha+
1} }
\\
& +  \eta\hat{F}(\gamma) \int_{0}^{\infty} \bigl( 1 -
e^{i\gamma w} \bigr) \frac{ e^{-\eta w}\mathrm{d} w}{\varGamma
(1-\alpha) w^{\alpha} }
\nonumber
\\
& -  \hat{F}(\gamma) \int_{0}^{\infty} \bigl(
e^{i\gamma w} -1 \bigr) \frac{\alpha e^{-\eta w}\mathrm{d} w}{\varGamma
(1-\alpha) w^{\alpha+ 1} }
\nonumber \\
& -  \eta\hat{F}(\gamma) \int_{0}^{\infty} \bigl(
e^{i\gamma w} -1 \bigr) \frac{ e^{-\eta w}\mathrm{d} w}{\varGamma
(1-\alpha) w^{\alpha} }  \Biggr\}
\nonumber
\\
={}&  C_{\alpha, \eta} \hat{F} ( \gamma) \Biggl\{  2 \int
_{0}^{\infty} (1 - \cos\gamma w) \frac{\alpha e^{-\eta w}\mathrm{d}
w}{\varGamma(1-\alpha) w^{\alpha+ 1} }
\nonumber
\\
& + 2 \eta\int_{0}^{\infty} (1 - \cos\gamma w)
\frac{ e^{-\eta w}\mathrm{d} w}{\varGamma(1-\alpha) w^{\alpha} }
\Biggr\}
\nonumber
\\
={}& C_{\alpha, \eta} \hat{F} ( \gamma) \Biggl\{ -2 w^{-\alpha}
(1-\cos
\gamma w) \frac{e^{-\eta w}}{\varGamma(1-\alpha)} |_{0}^{\infty}
\nonumber
\\
& - 2 \eta\int_{0}^{\infty} (1 - \cos\gamma w)
\frac{e^{-\eta w} \mathrm{d} w}{w^{\alpha}\varGamma(1-\alpha)}
\nonumber
\\
& + 2 \gamma\int_{0}^{\infty} \frac{e^{-\eta w} \sin\gamma
w}{w^{\alpha}\varGamma(1-\alpha)} \,
\mathrm{d} w
\nonumber
\\
& + 2 \eta\int_{0}^{\infty} (1 - \cos\gamma w)
\frac{e^{-\eta w}}{w^{\alpha}\varGamma(1-\alpha)} \mathrm{d} w
\Biggr\}
\nonumber
\\
={}& C_{\alpha, \eta} \hat{F} ( \gamma) 2 |\gamma| \int
_{0}^{\infty}
\frac{e^{-\eta w} \sin|\gamma| w}{w^{\alpha}\varGamma(1-\alpha)}
\, \mathrm{d} w
\nonumber
\\
={}&  C_{\alpha, \eta} \hat{F} ( \gamma) \frac{ 2| \gamma| }{ (
\eta^{2} + \gamma^{2})^{ \frac{1 - \alpha}{2}}} \sin\biggl((1-
\alpha) \arctan\frac{|\gamma|}{\eta} \biggr).
\nonumber
\end{align}
In the last step we used the following formula (\cite{gradtable}, p.~490, formula 5)
\begin{equation*}
\int_{0}^{\infty} x^{\mu- 1 } e^{ - \beta x}
\sin\delta x \, \mathrm{d} x = \frac{\varGamma(\mu)}{(\beta^{2} +
\delta^{2}) ^{ \frac{\mu}{2}}} \, \sin\biggl( \mu\arctan
\frac{\delta}{\mu} \biggr)
\end{equation*}
with $\mathrm{Re } \, \mu> -1$, $\mathrm{Re } \, \beta\geq\mathrm{Im
} \, \delta$.
%
\begin{remark}
For $ \eta\to0 $ we have that
\begin{align*}
\lim_{\eta\to0} \sin\biggl((1-\alpha) \arctan\frac{|\gamma|}{\eta}
\biggr) = \cos\biggl( \frac{\pi\alpha}{2} \biggr).
\end{align*}
Therefore
\begin{align*}
\lim_{\eta\to0} \int_{- \infty}^{+\infty}
e^{ i \gamma x} \, \frac{\partial^{\alpha, \eta} f}{\partial
|x|^{\alpha}} \, \mathrm{d} x= 2 C_{\alpha, 0} \, |
\gamma|^{\alpha}\cos\biggl( \frac{\pi\alpha}{2} \biggr)\, \hat
{F}(\gamma)
\end{align*}
and thus the normalizing constant must be $ C_{\alpha, 0} = - ( 2 \cos
\frac{\pi\alpha}{2} ) ^{-1} $.

This means that for $ \eta\to0 $ we obtain from \eqref
{eq:temp-riesz-fou} the Fourier transform of the Riesz fractional
derivative \eqref{eq:riesz-der}. This result shows that symmetric
stable processes are governed by equations
\begin{equation*}
\frac{\partial u}{\partial t } = \frac{\partial^{\alpha}u}{\partial
|x|^{\alpha}}
\end{equation*}
see, for example, \cite{toaldo14}, where the interplay between stable
laws, including subordinators and inverse
subordinators, and fractional equations is considered.
\end{remark}
%
%
\begin{remark}
For fractional equations of the form
%
\begin{equation}
\lleft\{
\begin{aligned}
&\frac{\partial u}{\partial t } = \frac{\partial^{\alpha, \eta}
u}{\partial|x|^{\alpha}},
& & t> 0, \;
x \in\mathbb{R},
\\
& u(x,0) = \delta(x), & & x \in\mathbb{R},
\end{aligned}
\rright.
\end{equation}
the Fourier transform of the solution reads
\begin{align*}
& \int_{-\infty}^{+\infty} e^{i \gamma x} u(x,t) \,
\mathrm{d} x
\\
&= \exp\biggl\{ t \, C_{\alpha, \eta} \frac{2 |\gamma| }{ ( \eta
^{2} + \gamma^{2})^{ \frac{1 - \alpha}{2} } } \sin\biggl( (1-\alpha
) \arctan\frac{|\gamma|}{\eta} \biggr) \biggr\}
\\
&= \exp\biggl\{ t \, C_{\alpha, \eta} \frac{2 |\gamma| }{ ( \eta
^{2} + \gamma^{2})^{1 - \frac{ \alpha}{2} } } \biggl[ |\gamma| \cos
\biggl( \alpha\arctan\frac{|\gamma|}{\eta} \biggr) - \eta\sin
\biggl( \alpha
\arctan\frac{|\gamma|}{\eta} \biggr) \biggr] \biggr\} .
\end{align*}
\end{remark}

\section{Fractional equations governing the drifted Brownian motion}
The law of the drifted Brownian motion started at $x$ satisfies the equations
\begin{equation*}
\frac{\partial u}{\partial t } = \frac{\partial^{2} u }{\partial
y^{2}} - \mu\frac{\partial u}{\partial y}, \quad t>0, y
\in\mathbb{R},
\end{equation*}
and
\begin{equation*}
\frac{\partial u}{\partial t } = \frac{\partial^{2} u }{\partial
x^{2}} + \mu\frac{\partial u}{\partial x}, \quad t>0,\; x
\in\mathbb{R}.
\end{equation*}
We show here that the drifted Brownian motion is related to time
fractional equations with tempered derivatives. Let us consider the process
%
\begin{equation}
B^{\mu}(t) = B(t) + \mu t + x, \quad\mu\in\mathbb{R},\; x \in
\mathbb{R}.
\end{equation}
The law $ u = u(x,y,t) $ of the process $ B^{\mu}$ is given by
%
\begin{equation}
\label{key} u ( x, y, t ) = \frac{
e^{ - \frac{(y-x-\mu t)^{2}}{4t} }
}{\sqrt{4 \pi t}} = \frac{ e^{ - \frac{ (y-x)^{2}}{ 4t} } }{\sqrt{4 \pi t}}
e^{ - \mu^{2} \frac{t}{4} + \frac{\mu}{2} (y-x)}, \quad t>0,\; x,y
\in\mathbb{R}.
\end{equation}
%
%
\begin{thm}
The law of $B^{\mu}$ solves the Cauchy problem
%
\begin{equation}
\label{eq:frac-eq-u-thm}
\lleft\{
\begin{aligned} &\mathscr{D} _{t}^{\frac{1}{2}, \eta} u +
\sqrt{\eta} \, u = a(x,y) \biggl( \frac{\partial u}{\partial x} + \sqrt
{\eta} \, u
\biggr), \quad t>0,\; x,y, \in\mathbb{R},
\\
& \textcolor{white} {\mathscr{D} _{t}^{\frac{1}{2}, \eta} u + \sqrt
{\eta}
\, u} = - a(x,y) \biggl( \frac{\partial u}{\partial y} - \sqrt{\eta}
\, u \biggr), \quad
t>0,\; x,y, \in\mathbb{R},
\\
&u(x,y,0) = \delta(x-y)
\end{aligned}
\rright.
\end{equation}
with
\begin{align*}
\eta= \frac{\mu^{2} }{4}.
\end{align*}
\end{thm}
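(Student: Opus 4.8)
The plan is to use the explicit Gaussian factorization together with a single heat-kernel identity proved by Laplace transforms. Write $g(x,y,t) = e^{-(y-x)^2/4t}/\sqrt{4\pi t}$ for the driftless kernel; since $\eta = \mu^{2}/4$ and $\mu>0$ give $\sqrt\eta = \mu/2$, the density factors as $u = g\,e^{-\eta t + \frac{\mu}{2}(y-x)}$. The first move is to reduce the common left-hand side of \eqref{eq:frac-eq-u-thm}. By the definition \eqref{eq:temp-rl-def}, $\mathscr{D}_t^{\frac12,\eta}u + \sqrt\eta\,u = e^{-\eta t}\,\mathscr{D}_t^{\frac12}(e^{\eta t}u)$, and because $e^{\eta t}u = g\,e^{\frac{\mu}{2}(y-x)}$ carries only a $t$-independent spatial factor, this collapses to $e^{-\eta t + \frac{\mu}{2}(y-x)}\,\mathscr{D}_t^{\frac12}g$. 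The entire problem is thereby reduced to identifying $\mathscr{D}_t^{\frac12}g$.

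The central step is the identity $\mathscr{D}_t^{\frac12}g = a(x,y)\,\partial_x g = -a(x,y)\,\partial_y g$, where $a(x,y) = \mathrm{sign}(y-x)$. I would prove it via the Laplace transform in $t$. The classical transform of the heat kernel is $\tilde g(\lambda) = e^{-\sqrt\lambda\,|y-x|}/(2\sqrt\lambda)$, so by \eqref{Ltransf-RL} one has $\lambda^{1/2}\tilde g(\lambda) = \tfrac12 e^{-\sqrt\lambda\,|y-x|}$. Differentiating the transform in $x$ gives $\partial_x\tilde g = -\tfrac12\,\mathrm{sign}(x-y)\,e^{-\sqrt\lambda|y-x|}$, whence $a\,\partial_x\tilde g = \lambda^{1/2}\tilde g$; the same computation in $y$ yields $-a\,\partial_y\tilde g = \lambda^{1/2}\tilde g$. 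Inverting the Laplace transform delivers the identity. Commuting $\partial_x$, $\partial_y$ with the $t$-transform is routine for $x\neq y$ and justified by dominated convergence.

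To match the right-hand sides I would differentiate the factorization directly. A short computation cancels the $\pm\sqrt\eta\,u$ contributions against the derivative of $e^{\frac{\mu}{2}(y-x)}$, giving $\partial_x u + \sqrt\eta\,u = e^{-\eta t + \frac{\mu}{2}(y-x)}\,\partial_x g$ and $\partial_y u - \sqrt\eta\,u = e^{-\eta t + \frac{\mu}{2}(y-x)}\,\partial_y g$. Multiplying the first by $a(x,y)$ and the second by $-a(x,y)$ and invoking the heat-kernel identity, both right-hand sides equal $e^{-\eta t + \frac{\mu}{2}(y-x)}\mathscr{D}_t^{\frac12}g$, which is precisely the reduced left-hand side from the first step; this establishes both equalities in \eqref{eq:frac-eq-u-thm} simultaneously. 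The initial condition follows since $u(x,y,t)\to\delta(x-y)$ as $t\downarrow0$.

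The main obstacle is only the heat-kernel identity $\mathscr{D}_t^{\frac12}g = a\,\partial_x g = -a\,\partial_y g$; once it is available the rest is bookkeeping with the exponential factors. Its delicate features are the emergence of the sign $a(x,y)$ --- equivalently the nonsmoothness of $|y-x|$ at $x=y$, which is exactly what forces the two first-order spatial operators to appear with opposite-signed coefficients --- and the legitimacy of differentiating under the $t$-Laplace transform; both are controlled for $x\neq y$, while the locus $x=y$ is absorbed into the distributional initial datum.
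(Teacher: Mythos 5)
Your proposal is correct and follows essentially the same route as the paper: the same Gaussian factorization $u = g\,e^{-\eta t + \frac{\mu}{2}(y-x)}$, the same key identity $\mathscr{D}_t^{\frac12}g = a\,\partial_x g = -a\,\partial_y g$ obtained from the Laplace transform $\tilde g = e^{-\sqrt{\lambda}\,|y-x|}/(2\sqrt{\lambda})$, and the same bookkeeping with \eqref{eq:temp-rl-def} to restore the tempered derivative. The only cosmetic difference is that the paper verifies $a\,\partial_x\tilde g = \sqrt{\lambda}\,\tilde g$ by passing through an additional Fourier transform in $x$ and inverting it, whereas you differentiate the explicit $\tilde g$ pointwise for $x \neq y$; both are legitimate.
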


\begin{proof}
We start by computing the Laplace--Fourier transform of the function
\begin{equation*}
g(x,y,t) = \frac{e^{ - \frac{ (y-x)^{2}}{4t}}}{\sqrt{4 \pi t }},
\end{equation*}
that is,
\begin{align*}
\hat{\tilde{ g}} (y,\xi, \lambda) &= \int_{0}^{\infty}
e^{ - \lambda t} \int_{ -\infty}^{+\infty} e^{i \xi x}
g(x,y,t) \, \mathrm{d} x \,\mathrm{d}t
\\
&= \int_{0}^{\infty}e^{- \lambda t} e^{i \xi y \,- \,\xi^{2} t}
\,\mathrm{d} t
\\
&= \frac{e^{i \xi y}}{\lambda+ \xi^{2}} .
\end{align*}

By using the fact that
%
\begin{align}
\tilde{g}(x,y,\lambda) = \frac{e^{-|y-x|\sqrt{\lambda}}}{2 \sqrt
{\lambda}} =
\lleft\lbrace
\begin{array}{@{}ll}
\displaystyle\frac{e^{-(y-x)\sqrt{\lambda}}}{2 \sqrt{\lambda}},
\quad y>x,\\[9pt]
\displaystyle\frac{e^{-(x-y)\sqrt{\lambda}}}{2 \sqrt{\lambda}},
\quad y \leq x,
\end{array}
\rright.
\label{lapBYUSING}
\end{align}
we now compute the double transform of $ a(x,y) \frac{ \partial g
}{\partial x} $.
\begin{align}
\label{eq:fou-lap-der} &\int_{-\infty}^{\infty} e^{ i \xi x}
\bigl[ \mathbh{1}_{(-\infty,y]}(x)- \mathbh{1} _{ (y, \infty)}(x)
\bigr]
\frac{\partial\tilde{g}}{\partial x} (x,y,\lambda) \, \mathrm{d} x
\\
&= \frac{1}{2} \Biggl( \int_{ -\infty}^{y}
e^{i \xi y} e^{-(y-x)\sqrt{\lambda}} \,\mathrm{d} x + \int_{y}^{\infty
}e^{i \xi y}
e^{-(x-y)\sqrt{\lambda}} \,\mathrm{d} x \Biggr)
\nonumber
\\
&= \frac{e^{i\xi y}}{2} \Biggl( \int_{0}^{\infty}e^{- i \xi x}
e^{-x \sqrt{\lambda}} \,\mathrm{d} x + \int_{0}^{\infty}e^{ i \xi x}
e^{-x \sqrt{\lambda}} \,\mathrm{d} x \Biggr)
\nonumber
\\
&= \frac{e^{i\xi y}}{2} \biggl( \frac{1}{i \xi+ \sqrt{\lambda}} +
\frac{1}{ - i \xi+ \sqrt{\lambda}} \biggr)
\nonumber
\\
&= \sqrt{\lambda}\frac{e^{i \xi y}}{\lambda+ \xi^{2}} = \sqrt
{\lambda}\hat{\tilde{g}} .
\nonumber
\end{align}
This implies, by inverting the Fourier transform, that
%
\begin{equation}
\label{eq:inv-fou-g} a(x,y) \frac{\partial\tilde{ g}}{\partial x}=
\sqrt{\lambda}\tilde{g}.
\end{equation}
We recall that
%
\begin{equation}
\int_{0}^{\infty} e^{-\lambda t} \mathscr{D}
^{\frac{1}{2}}_{t} g \, \mathrm{d} t = \sqrt{\lambda}\tilde{g},
\end{equation}
thus by inverting the Laplace transform in \eqref{eq:inv-fou-g} we obtain
%
\begin{equation}
\label{eq:g-eq} \mathscr{D} ^{\frac{1}{2}}_{t} g = a(x,y)
\frac{\partial g}{\partial x}
\end{equation}
and, by considering the same arguments (see \eqref{lapBYUSING}),
\begin{align*}
\mathscr{D} ^{\frac{1}{2}}_{t} g = -a(x,y) \frac{\partial g}{\partial y}.
\end{align*}
Returning to our initial problem, by using \eqref{eq:g-eq} and \eqref
{eq:temp-rl-def} we have that
\begin{align*}
\mathscr{D} _{t}^{\frac{1}{2}, \frac{\mu^{2} }{4}} u &= e^{- \frac{\mu
^{2} }{4} t } \mathscr{D}
^{\frac{1}{2}}_{t} \bigl( e^{ \frac{\mu^{2}t }{4}} u \bigr) -
\frac{\mu}{2} u
\\
& = e^{+\frac{\mu}{2} (y-x) \,- \,\frac{\mu^{2}}{4} t } \, \mathscr
{D} ^{\frac{1}{2}}_{t} g -
\frac{\mu}{2} u
\\
& = e^{- \frac{\mu^{2}}{4} t} \, e^{\frac{\mu}{2} (y-x) } \, \,
a(x,y) \frac{\partial g}{\partial x} -
\frac{\mu}{2} u
\\
&= a(x,y) \biggl( \frac{\partial u}{\partial x} + \frac{\mu}{2} u
\biggr) -
\frac{\mu}{2} u
\end{align*}
and
\begin{align*}
\mathscr{D} _{t}^{\frac{1}{2}, \frac{\mu^{2} }{4}} u &= e^{- \frac{\mu
^{2} }{4} t } \mathscr{D}
^{\frac{1}{2}}_{t} \bigl( e^{ \frac{\mu^{2}t }{4}} u \bigr) -
\frac{\mu}{2} u
\\
& = e^{+\frac{\mu}{2} (y-x) \,- \,\frac{\mu^{2}}{4} t } \, \mathscr
{D} ^{\frac{1}{2}}_{t} g -
\frac{\mu}{2} u
\\
& = -e^{- \frac{\mu^{2}}{4} t} \, e^{\frac{\mu}{2} (y-x) } \, \,
a(x,y) \frac{\partial g}{\partial y} -
\frac{\mu}{2} u
\\
&= a(x,y) \biggl( -\frac{\partial u}{\partial y} + \frac{\mu}{2} u
\biggr) -
\frac{\mu}{2} u.
\end{align*}
This completes the proof.
\end{proof}

The drifted Brownian motion has therefore a transition function satisfying
a time fractional equation where the fractional derivative is a
tempered Riemann--Liouville derivative with parameter $ \eta$ which is
related to the drift by the relationship $ \sqrt{\eta}= \frac{\mu}{2} $.

\section{Fractional equation governing the folded drifted Brownian motion}
We here consider the process
%
\begin{equation}
|B(t) + \mu t | + x = |B^{\mu}(t) | + x, \quad x>0.
\end{equation}
This process has distribution
\begin{align*}
P\bigl(| B(t) + \mu t| + x < y\bigr) ={}& P\bigl(x-y-\mu t < B(t) <
y-x-\mu t
\bigr)
\\
={}& \int_{x-y-\mu t}^{y-x-\mu t} \frac{e^{ - \frac{w^{2}}{4t }}}{\sqrt
{4 \pi t }} \,
\mathrm{d} w
\end{align*}
and therefore its transition function is
\begin{align}
\label{eq:v-tran-foo} P\bigl(|B ^{\mu}(t)| + x \in\mathrm{d} y\bigr) /
\mathrm{d} y &= \frac{
e^{ - \frac{(y-x-\mu t)^{2}}{4t} }
}{\sqrt{ 4 \pi t}} + \frac{
e^{ - \frac{(y-x+\mu t)^{2}}{4t} }
}{\sqrt{ 4 \pi t}}
\\
&= \frac{e^{ - \frac{(y-x)^{2}}{4t} }}{\sqrt{4 \pi t}} e^{ - \mu^{2}
\frac{t}{4} } \bigl[ e^{ - \frac{\mu}{2} (y-x) } +
e^{ \frac{\mu}{2} (y-x) } \bigr]
\nonumber
\\
&= v(x,y,t)
\nonumber
\end{align}
for $ y>x $ and $t>0$.
We now prove the following theorem.
%
\begin{thm}
The law $ v $ of $ | B^{\mu}(t) | +x $ satisfies the fractional equation
%
\begin{equation}
\label{eq:frac-eq-v-thm} \mathscr{D} _{t} ^{ \frac{1}{2} , \eta} v = -
\frac{\partial v }{\partial y} + v \, \sqrt{\eta}\tanh\bigl(\sqrt
{\eta}( y-x)\bigr) -
\frac{\mu}{2} v, \quad y > x > 0,
\end{equation}
with initial and boundary conditions
\begin{align*}
v(x,y,0) ={}& \delta( y - x),
\\
v(x,x,t) ={}& \frac{e^{-\eta t}}{\sqrt{\pi t}}, \quad t>0,
\end{align*}
and
\begin{align*}
\eta= \frac{\mu^{2}}{4}.
\end{align*}
\end{thm}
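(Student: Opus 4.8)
The plan is to exploit the factorized form of the explicit density. Writing $\eta=\mu^{2}/4$ and $\sqrt{\eta}=\mu/2$, I would first read off from \eqref{eq:v-tran-foo} that, on the region $y>x$,
\[
v(x,y,t) = 2\, e^{-\eta t}\, g(x,y,t)\, \cosh\bigl(\sqrt{\eta}(y-x)\bigr),
\]
where $g(x,y,t)=e^{-(y-x)^{2}/(4t)}/\sqrt{4\pi t}$ is precisely the heat kernel analyzed in the previous theorem. The decisive observation is that the hyperbolic factor $2\cosh(\sqrt{\eta}(y-x))$ depends only on $x,y$ and not on $t$, so under the Riemann--Liouville operator $\mathscr{D}_{t}^{1/2}$ it plays the role of a constant.

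Next I would apply the definition \eqref{eq:temp-rl-def}, $\mathscr{D}_{t}^{1/2,\eta}v = e^{-\eta t}\mathscr{D}_{t}^{1/2}\bigl(e^{\eta t}v\bigr) - \sqrt{\eta}\,v$. Since $e^{\eta t}v = 2g\cosh(\sqrt{\eta}(y-x))$, pulling the $t$-independent factor out of $\mathscr{D}_{t}^{1/2}$ and invoking the identity $\mathscr{D}_{t}^{1/2}g = -a(x,y)\,\partial g/\partial y$ established in the proof of the previous theorem gives
\[
\mathscr{D}_{t}^{1/2,\eta}v = -2\, e^{-\eta t}\,\cosh\bigl(\sqrt{\eta}(y-x)\bigr)\, a(x,y)\,\frac{\partial g}{\partial y} - \sqrt{\eta}\,v .
\]
On the admissible region $y>x>0$ one has $a(x,y)=1$, which removes the indicator altogether.

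The remaining step is purely algebraic. Differentiating the factorized form of $v$ in $y$ yields
\[
\frac{\partial v}{\partial y} = 2\, e^{-\eta t}\Bigl[\cosh\bigl(\sqrt{\eta}(y-x)\bigr)\frac{\partial g}{\partial y} + \sqrt{\eta}\, g\,\sinh\bigl(\sqrt{\eta}(y-x)\bigr)\Bigr],
\]
and, because $2e^{-\eta t}g\sinh(\sqrt{\eta}(y-x)) = v\tanh(\sqrt{\eta}(y-x))$, I can solve for the term $2e^{-\eta t}\cosh(\sqrt{\eta}(y-x))\,\partial g/\partial y$ and substitute it above, arriving at exactly $\mathscr{D}_{t}^{1/2,\eta}v = -\partial v/\partial y + \sqrt{\eta}\,v\tanh(\sqrt{\eta}(y-x)) - \tfrac{\mu}{2}v$, which is \eqref{eq:frac-eq-v-thm}. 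The boundary condition follows at once by evaluating the density at $y=x$, where $\cosh(0)=1$ gives $v(x,x,t)=2e^{-\eta t}/\sqrt{4\pi t}=e^{-\eta t}/\sqrt{\pi t}$; the initial condition $v(x,y,0)=\delta(y-x)$ follows from the concentration of $g$ as $t\to0^{+}$, the factor $2$ compensating exactly for the fact that only the half-line $y>x$ is retained, so that unit mass is captured at $y=x^{+}$.

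I expect the only genuine subtlety to be the careful handling of the indicator $a(x,y)$: the equation is asserted solely on $y>x>0$, precisely the region where $a\equiv1$, so the delicate sign bookkeeping that complicated the two-sided statement of the previous theorem collapses here. The passage of the $t$-independent $\cosh$ factor through $\mathscr{D}_{t}^{1/2}$ and the $\tanh$ identity are then routine once the factorization is in place.
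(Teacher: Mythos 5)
Your proof is correct and follows essentially the same route as the paper's: factor $e^{\eta t}v = 2\cosh(\sqrt{\eta}(y-x))\,g$, use $\mathscr{D}_{t}^{1/2}g=-\partial g/\partial y$, and trade $\cosh(\sqrt{\eta}(y-x))\,\partial g/\partial y$ for $\partial v/\partial y$ plus a $\tanh$ term via the product rule (your signs in fact match the theorem statement, whereas the paper's final display has a harmless sign slip). The only divergence is that the paper re-establishes $\mathscr{D}_{t}^{1/2}g=-\partial g/\partial y$ on the half-line $y>x$ by a Laplace transform in $y$ via the Mittag--Leffler function, together with the boundary value $g(x,x,t)=1/\sqrt{4\pi t}$, whereas you import $\mathscr{D}_{t}^{1/2}g=-a(x,y)\,\partial g/\partial y$ from the preceding theorem and restrict to the region where $a\equiv 1$; this is legitimate since that identity is pointwise in $(x,y)$.
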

\begin{proof}
From \eqref{eq:v-tran-foo} and \eqref{eq:temp-rl-def} we have that
\begin{align*}
\mathscr{D} _{t}^{\frac{1}{2} } \bigl( e^{ \frac{\mu^{2} t}{4} } v
\bigr) &= 2
\cosh\biggl( \frac{\mu}{2} (y-x) \biggr) \,\, \mathscr{D}
_{t}^{ \frac{1}{2} } \biggl( \frac{e^{ - \frac{(y-x)^{2}}{4t} }}{\sqrt
{4 \pi t}} \biggr)
\end{align*}
Let $E_{\frac{1}{2}}$ be the Mittag-Leffler function of order $1/2$ and
$g$ be the function
\begin{align*}
g(x,y,t) = \frac{e^{ - \frac{(y-x)^{2}}{4t} }}{\sqrt{4 \pi t}}, \quad y>x>0.
\end{align*}
Since
\begin{align*}
\int_{0}^{\infty}e^{-\lambda t} \int
_{x}^{\infty}e^{-\xi y} g(x,y,t) \, \mathrm{d} y\,
\mathrm{d} t ={}& e^{-\xi x} \int_{0}^{\infty}e^{-\lambda t}
E_{\frac{1}{2}}\bigl(- \xi t^{\frac{1}{2}}\bigr) \, \mathrm{d} t
\\
={}& e^{-\xi x} \frac{\lambda^{\frac{1}{2}-1}}{\xi+ \lambda^{\frac{1}{2}}}
\end{align*}
we obtain that
\begin{align*}
\mathscr{D}^{\frac{1}{2}}_{t} g(x,y,t) = - \frac{\partial g}{\partial
y} \quad
\text{\textrm{with boundary condition }}  g(x,x, t) = \frac{1}{\sqrt
{4\pi t}}.
\end{align*}

Then, for $y>x$,
\begin{align*}
& \mathscr{D} _{t}^{\frac{1}{2} } \bigl( e^{ \frac{\mu^{2} t}{4} } v
\bigr)
\\
& = 2 \cosh\biggl( \frac{\mu}{2} (y-x) \biggr) \,\, \biggl( -
\frac{\partial}{\partial y} \biggl( \frac{e^{ - \frac{(y-x)^{2}}{4t}
}}{\sqrt{4 \pi t}} \biggr) \biggr)
\\
&= -\frac{\partial}{\partial y} \biggl( 2 \cosh\biggl( \frac{\mu}{2}
(y-x) \biggr)
\frac{e^{ - \frac{(y-x)^{2}}{4t} }}{\sqrt{4 \pi t}} \biggr) + \frac{\mu
}{2} \cdot2 \sinh\biggl(
\frac{\mu}{2} (y-x) \biggr) \frac{e^{ - \frac{(y-x)^{2}}{4t} }}{\sqrt
{4 \pi t}}
\\
&= -\frac{\partial}{\partial y} \bigl( e^{\frac{\mu^{2}}{4}t }
v(x,y,t) \bigr) +\mu\, \sinh
\biggl( \frac{\mu}{2} (y-x) \biggr) \frac{e^{ - \frac{(y-x)^{2}}{4t}
}}{\sqrt{4 \pi t}}
\end{align*}
with boundary condition
\begin{align*}
e^{\frac{\mu^{2}}{4}t} v(x,x,t)= \frac{1}{\sqrt{4\pi t}} + \frac
{1}{\sqrt{4\pi t}} .
\end{align*}
In view of \eqref{eq:temp-rl-def} we obtain that
\begin{align*}
\mathscr{D} ^{\frac{1}{2}, \eta}_{t} v + \frac{\mu}{2} v & =
e^{ - \frac{ \mu^{2}}{4}t } \mathscr{D} _{t}^{\frac{1}{2}} \bigl(
e^{ \frac{\mu^{2}}{4}t } v \bigr)
\\
&= -\frac{\partial v}{\partial y} - \mu e^{ - \frac{\mu^{2}}{4}t }
\sinh\biggl( \frac{\mu}{2}
(y-x) \biggr) \frac{e^{ - \frac{(y-x)^{2}}{4t} }}{\sqrt{4 \pi t}}
\\
&= -\frac{\partial v}{\partial y} - \frac{\mu}{2} \, \tanh\biggl(
\frac{\mu}{2}
(y-x) \biggr) v.
\qedhere
\end{align*}
\end{proof}

This result shows that the structure of the governing equation of the
process $ |B(t) + \mu t | + x $ is substantially different from that of
$ B(t) + \mu t + x $.
The difference between
\eqref{eq:frac-eq-u-thm} and \eqref{eq:frac-eq-v-thm}
consists in the non-constant coefficient $ \tanh\frac{\mu}{2} (y-x) $
which converges to one as the difference $ |y-x| $ tends to infinite.
Thus the two equations emerging in this analysis coincide for
$ |x-y| \to\infty$.




\end{document}